\newtheorem{thm}{Theorem}[section]
\newtheorem{lem}[thm]{Lemma}
\newtheorem{prop}[thm]{Proposition}
\theoremstyle{definition}
\theoremstyle{remark}
\numberwithin{equation}{section}
\begin{document}
	
	%
	%
	%
	%
	%
	%
	%
	%
	%

	\author[Jafar Ahmed,  Ajebbar Omar and Elqorachi Elhoucien]{Jafar Ahmed$^{1}$, Ajebbar Omar$^{2}$ and Elqorachi Elhoucien$^{1}$}
	
	\address{%
		$^1$
	 Ibn Zohr University, Faculty of sciences,
		Department of mathematics,
		Agadir,
		Morocco}
	\address{%
	$^2$
	Sultan Moulay Slimane University, Multidisciplinary Faculty,
	Department of mathematics and computer science,
	Beni Mellal,
	Morocco}
\email{hamadabenali2@gmail.com, omar-ajb@hotmail.com, elqorachi@hotmail.com }
\thanks{2020 Mathematics Subject Classification: primary 39B52, secondary 39B32\\ Key words and phrases: Semigroups, multiplicative function,  sine addition law.}

	\title[  A Kannappan-sine addition law on semigroups]{  A Kannappan-sine addition law on  semigroups}
	\begin{abstract} Let $S$ be a semigroup and $z_{0}$  a fixed element in $S.$
		We determine the complex-valued solutions of  the following  Kannappan-sine addition law	$f(xyz_{0})=f(x)g(y)+f(y)g(x),x,y\in S.$
	\end{abstract}
	\maketitle\emph{}
		 \section{Introduction}
		 Let $S$ be a semigroup. Let $\mathbb{C}$ be a field of complex numbers. The sine addition law   for unknown functions $f,g:S \longrightarrow \mathbb{C}$ is the functional equation
		 \begin{equation}
		 \label{1}
		 f(xy)=f(x)g(y)+f(y)g(x), \hspace{0.4cm}x,y \in S,
		 \end{equation}
		  The functional equation (\ref{1})   has been solved  on   groups and semigroups generated by their squares (see for example \cite{ajjj}, \cite{eb} and \cite{ebb}).
		    The solutions of the functional equation (\ref{1})  on a general semigroup  were  given recently by Ebanks \cite[Theorem 3.5]{c},  \cite[ Theorem 2.1  ]{b} and \cite[ Theorem 3.1]{d}.  Moreover, the continuous solutions are also known on topological groups and semigroups.
		     For additional discussions  of this functional equation and their history, see \cite[Chapter 13]{a}, \cite[Chapter 4]{g}
		     and their references.
		     \\In this paper we consider the following  Kannappan-sine addition law (\ref{1}), namely
		     \begin{equation}
		     \label{3}
		     f(xyz_0)=f(x)g(y)+f(y)g(x), \; x,y \in S,
		     \end{equation}
		      where $S$ is a semigroup and $z_0$ is a fixed element in   $S$. \\ On a monoid with identity element $e$ the functional equation (\ref{3}) becomes $g(e)f(xy)+f(e)g(xy)=f(x)g(y)+f(y)g(x)$. So, if $f(e)=0$ or $g(e)=0$, the last functional equation is the sine addition law, which was solved recently by Ebanks on semigroups \cite{d,c}. When $f(e)=\beta\neq 0$ and $g(e)=\alpha\neq 0$ we can  easily verify that $f,g$ satisfy the following cosine addition law\\ $2\bigl[\dfrac{f}{\beta}+\dfrac{g}{\alpha}](xy)=[\dfrac{f}{\beta}+\dfrac{g}{\alpha}](x)[\dfrac{f}{\beta}+\dfrac{g}{\alpha}](y)-
		      [\dfrac{-f}{\beta}+\dfrac{g}{\alpha}](x)[\dfrac{-f}{\beta}+\dfrac{g}{\alpha}](y)$, and from \cite{b} we get the solutions of the  functional equation (\ref{3}) on monoids.  \\It should be noted that  the traitement above on monoids are used in \cite[Corollary 2.4]{l} to solve equation (\ref{3}) on groups.\\We notice also that, when $z_{0}$ is in the centre of $S$, then equation (\ref{3}) can be written as follows $f(x\ast y)=f(x)g(y)+f(y)g(x)$, where $x\ast y=xyz_0$ is an associative law on $S$, and then the
		      functional equations (\ref{1}) and (\ref{3}) coincide,
		      and as mentioned above their solutions are known, so the contributions
		      of the present paper to the theory of trigonometric functional equations
		      in the non-abelian case.\\
		 	 The special case of (\ref{3}) in which $g=\dfrac{1}{2}f$, is the functional equation
		 	\begin{equation}
		 	\label{7}
		 	f(xyz_0)=f(x)f(y), \hspace{0.4cm} x,y \in S,
		 	\end{equation}
		 	 which has been solved on semigroups by Stetk\ae r \cite[Proposition 16]{i}. He proved that any non-zero solution
		 	 $f: S \longrightarrow \mathbb{C}$ of (\ref{7}) has the form
		 	$f=\chi(z_0)\chi,$
		 	where $\chi$ is  exponential function on $S$ such that $ \chi(z_0) \neq 0$. We shall use his result in the present paper.\\ The result by Kannappan \cite{Kannp} and its generalization (see for example \cite{sahoo}, \cite{i}), has been an inspiration
		 	by its treatment of similar functional equations on groups.\\ Our main goal is to extend
		 	 works in the literature in which $S$ is a monoid or a group, by showing
		 	 that existence of an identity element or a group inversion are not needed,
		 	 because we present proofs that work for general semigroups. In this context
		 	 we obtain in Theorem \ref{t1}  explicit formulas for the solutions of (\ref{3}).
		 	 \\Our main contributions to the knowledge about   (\ref{3})
		    		are the following:\\
		    		1.  We extend the setting from groups to semigroups.\\
		    	2.  We relate the solutions of (\ref{3}) to those of (\ref{1}) and (\ref{7}).
		    			    		\\ We also show that the functional equation
		    		\begin{equation}
		    		\label{5}
		    		f(xyz_0)=f(x)g(y)-f(y)g(x), \hspace{0.4cm} x,y \in S
		    		\end{equation}
		    		has  only trivial solutions on a semigroup $S$, with $z_0$ is a fixed element in $S$.
		 	\section{Set up, notations and terminology}
		 	Throughout this paper we enforce the set up below.\\
		 		$S$ is a semigroup (a set with an associative composition rule) and $z_0$ is a fixed element in $S$.
		 		
		 	If $X$ is a
		 	topological space we denote by $C(X)$ the algebra of continuous functions
		 	from $X$ to the field of complex numbers  $\mathbb{C}$. Let $\mathbb{C}^{*}=\setminus\{0\}$.
		 	
		 	A map $A:S\longrightarrow \mathbb{C}$ is said to be additive if
		 	\begin{center}
		 	$
		 	A(xy)=A(x)+A(y)$   for   all $  x,y \in S$,\end{center}
		  and a function $\chi :S\longrightarrow \mathbb{C}$ is multiplicative if
		  \begin{center}
		   $
		 	\chi(xy)=\chi(x)\chi(y)$ for  all $ x,y \in S.$
		 \end{center}
		 	If $\chi  $ is multiplicative and  $\chi \neq 0$
		 	then  we call  $ \chi$ an \textit{exponential}. For a multiplicative function  $\chi :S\longrightarrow \mathbb{C}$,  we define the \textit{null space} $I_{\chi}$ by
		 	\begin{center}
		 	$I_{\chi} := \{x\in S\hspace{0.1cm} | \hspace{0.1cm} \chi(x) = 0\}$.
		 \end{center}
		 	Then $I_{\chi}$ is  either empty or a proper subset of $S$ and $I_{\chi}$ is a
		 	two sided ideal in S if not empty and $S\setminus I_{\chi}$ is a subsemigroup of $S$.
		 	
		 	 A function $f : S\longrightarrow \mathbb{C}$ is said to be central if $f(xy) = f(yx)$ for all $x, y \in S$.
		 	 A function $f$ is said to be abelian if
		 	 		 	 $
		 	 		 	  f(x_1 x_2,...,   x_n) = f(x_{\sigma(1)}x_{\sigma(2)},...,
		 	  x_{\sigma(n)})
		 	  $
		 	   for all
		 	 $x_1, x_2, . . . , x_n \in S$, all permutations $\sigma$ of $n$ elements and all $n \in \mathbb{N}$.\\ Note that additive and multiplicative functions are abelian.
		 	
		 	  For any  subset $T \subseteq S$ let  $T^2 :=\{xy  \hspace{0.1cm}| \hspace{0.1cm}x,y   \in T\}$ and for any fixed element $z_0$ in S we let
		 	 $T^2z_0:=\{xyz_0 \hspace{0.1cm}|\hspace{0.1cm} x,y\in T  \}$.
		 	
		 	To characterize and describe some solutions of our functional equations, we use the partition of the null space $I_{\chi}$ into the disjoint union $I_{\chi} =  P_{\chi}\cup (I_{\chi} \setminus P_{\chi} ) $, with
		 	 \begin{equation*}
		   P_{\chi}:=\{p\in I_{\chi}\setminus I_{\chi}^2 \hspace{0.1cm} | \hspace{0.1cm}up,pv,upv \in  I_{\chi}\setminus I_{\chi}^2 \hspace{0.1cm}\text{for}\hspace{0.1cm} \text{all} \hspace{0.1cm} u,v \in S\setminus I_{\chi}\}.
		 	  \end{equation*}
		 	  For more discussions   about  the partition of the null space $I_{\chi}$ we  refer to  \cite{ c}, \cite{ b} and \cite{d}. The following   lemma   will be used later.
		 	  \begin{lem}
		 	  	\label{eb}  $p\in P_{\chi} \implies up,pv, upv \in  P_{\chi}$ for all  $u,v \in
		 	    S\setminus I_{\chi}\ $.
		 	  	\end{lem}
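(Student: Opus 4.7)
The plan is to verify the two defining conditions of membership in $P_\chi$ directly for $up$, $pv$, and $upv$, using only associativity, the definition of $P_\chi$ for $p$, and the fact (stated just above the lemma) that $S\setminus I_\chi$ is a subsemigroup of $S$.

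Fix $p\in P_\chi$ and $u,v\in S\setminus I_\chi$. First I would record that the first condition --- namely membership in $I_\chi\setminus I_\chi^2$ --- is immediate for each of the three elements $up$, $pv$, $upv$, because it is written directly into the definition of $P_\chi$ applied to $p$. So the work is entirely in checking the second condition: that for arbitrary $u',v'\in S\setminus I_\chi$, each of the three products $u'(up)$, $(up)v'$, $u'(up)v'$ (and the analogous triples built from $pv$ and $upv$) lies in $I_\chi\setminus I_\chi^2$, and likewise for $pv$ and $upv$.

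The key reduction is that whenever we form $u'u$ or $vv'$ with $u,u',v,v'\in S\setminus I_\chi$, the result is still in $S\setminus I_\chi$ (subsemigroup property). Thus, using associativity to rebracket, every product we have to inspect can be written in one of the three canonical shapes $\tilde u p$, $p\tilde v$, or $\tilde u p \tilde v$ with $\tilde u,\tilde v\in S\setminus I_\chi$, and membership in $I_\chi\setminus I_\chi^2$ then follows from the defining property of $p\in P_\chi$. Concretely: for $up\in P_\chi$, one checks $u'(up)=(u'u)p$, $(up)v'=upv'$, $u'(up)v'=(u'u)pv'$; for $pv\in P_\chi$, one checks $u'(pv)=u'pv$, $(pv)v'=p(vv')$, $u'(pv)v'=u'p(vv')$; for $upv\in P_\chi$, one checks $u'(upv)=(u'u)pv$, $(upv)v'=up(vv')$, $u'(upv)v'=(u'u)p(vv')$. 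Each right-hand side lies in $I_\chi\setminus I_\chi^2$ by the definition of $P_\chi$.

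There is no real obstacle here; the lemma is a bookkeeping statement, and the only thing to be careful about is invoking the subsemigroup property of $S\setminus I_\chi$ at exactly the moments when a product $u'u$ or $vv'$ appears, so that the defining condition of $P_\chi$ can be applied to $p$ with these compound elements in place of a single element of $S\setminus I_\chi$.
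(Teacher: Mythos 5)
Your proof is correct and is essentially the paper's own argument: the paper simply states that the lemma ``follows directly from the definition of $P_{\chi}$,'' and your write-up is the careful unpacking of that one-liner, correctly invoking the subsemigroup property of $S\setminus I_{\chi}$ to reduce every product to one of the canonical shapes $\tilde u p$, $p\tilde v$, $\tilde u p\tilde v$.
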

		 	  	\begin{proof}
		 	  	Follows directly from the definition of $  P_{\chi}$.
		 	  	\end{proof}
		 	    \section{Preparatory works}
		 	   In this section  we prove some useful lemmas that will be used in the proof of  our main results.
		 	  	 \begin{lem}
		 	  	 	\label{ui}
		 	  	   Let $S$ be  a semigroup, $n\in \textit{N},$
		 	  	   and $ \chi_1,\chi_2,...\chi_n:S\longrightarrow \mathbb{C}$    be   exponential functions. Then  \\
		 	  	    (a) $\{\chi_1,\chi_2,...,\chi_n\}$ is linearly independent.  \\
		 	  	   (b) If  $A: S\setminus I_{\chi}\longrightarrow \mathbb{C}$ is  a non-zero additive function and $\chi$ is exponential on $S$, then    the set   $\{\chi A, \chi\}$ is linearly independent on   $S\setminus I_{\chi}$.
		 	  	 \end{lem}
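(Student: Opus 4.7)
The plan for (a) is a standard induction on $n$, adapted to semigroups. For $n=1$ the statement is immediate, since by definition an exponential is nonzero. For the inductive step, assume the $\chi_i$ are pairwise distinct (otherwise there is nothing to prove, as the claim clearly intends distinct exponentials) and suppose $\sum_{i=1}^{n} c_i \chi_i = 0$ on $S$. Replacing $x$ by $xy$ and using multiplicativity yields $\sum_{i=1}^{n} c_i \chi_i(y)\chi_i(x) = 0$ for all $x,y$. Multiplying the original identity by $\chi_n(y)$ and subtracting kills the $n$-th term, giving $\sum_{i=1}^{n-1} c_i\bigl(\chi_i(y)-\chi_n(y)\bigr)\chi_i(x) = 0$ for all $x,y$. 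By the inductive hypothesis applied with $x$ as the variable, each coefficient $c_i(\chi_i(y)-\chi_n(y))$ vanishes for all $y$. Since $\chi_i \neq \chi_n$ for $i<n$, one can pick $y_0$ with $\chi_i(y_0)\neq\chi_n(y_0)$, forcing $c_i=0$. Finally $c_n \chi_n = 0$ together with $\chi_n\neq 0$ forces $c_n=0$.

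For (b), the idea is to divide out $\chi$. Suppose $\alpha\, \chi A + \beta\, \chi = 0$ on $S\setminus I_\chi$ for some $\alpha,\beta\in\mathbb{C}$. Since $\chi(x)\neq 0$ for every $x\in S\setminus I_\chi$, this reduces to
\begin{equation*}
\alpha A(x) + \beta = 0 \quad \text{for all } x \in S\setminus I_\chi.
\end{equation*}
If $\alpha\neq 0$, then $A$ is constant on $S\setminus I_\chi$, say $A\equiv c$. Recall from the preliminaries that $S\setminus I_\chi$ is a subsemigroup of $S$, so for any $x,y\in S\setminus I_\chi$ we have $xy\in S\setminus I_\chi$, and additivity gives $c = A(xy) = A(x)+A(y) = 2c$, hence $c=0$. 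This contradicts $A\neq 0$ on $S\setminus I_\chi$, so $\alpha=0$, and then $\beta\,\chi = 0$ on $S\setminus I_\chi$ forces $\beta=0$.

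The only subtlety worth flagging is that part (a) tacitly assumes the exponentials are distinct, and that part (b) relies on the structural fact, already recorded in the paper, that $S\setminus I_\chi$ is a subsemigroup; this is what makes additivity of $A$ on $S\setminus I_\chi$ meaningful and allows the contradiction $c=2c$. Neither step should present a genuine obstacle; both are routine once these points are observed.
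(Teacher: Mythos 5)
Your proof is correct, but it is worth noting that the paper does not actually prove this lemma at all: its ``proof'' consists of two citations, namely Stetk\ae r's book \cite[Theorem 3.18]{g} for part (a) and \cite[Lemma 4.4]{ajjj} for part (b). What you have written is essentially a self-contained reconstruction of those cited results. For (a) you give the classical Artin--Dedekind induction (substitute $xy$, multiply the original relation by $\chi_n(y)$, subtract, and invoke the inductive hypothesis), which is exactly the standard argument behind the independence of distinct exponentials; your observation that the statement tacitly requires the $\chi_i$ to be pairwise distinct is a fair and necessary reading of the lemma as the paper uses it. For (b) your argument is clean: dividing by $\chi$ on $S\setminus I_\chi$ (which is nonempty since $\chi\neq 0$, and is a subsemigroup as the paper records) reduces the relation to $\alpha A+\beta=0$, and the identity $A(xy)=A(x)+A(y)$ forces a constant additive function on a subsemigroup to vanish via $c=2c$, contradicting $A\neq 0$. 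Both steps are complete and correct; the only difference from the paper is that you supply the arguments rather than outsourcing them to the literature, which makes the lemma self-contained at no real cost.
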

		 	  	 \begin{proof}
		 	  	 	(a): See \cite [Theorem 3.18]{g}. (b): See \cite[Lemma 4.4]{ajjj}.
		 	  	 	\end{proof}
	 	  	 	To obtain some  of the solutions of the functional equation (\ref{3})
		 	 we   need   the solutions of the following functional equation
		 	  \begin{equation}
		 	  \label{78}
		 	  f(xyz_0)=f(x)+f(y), \hspace{0.3cm} x,y \in S.
		 	  \end{equation}
		 	    Lemma \ref{hh} describes the  solutions of the functional equation (\ref{78}) on semigroups.
		 	  \begin{lem}
		 	  	\label{hh}
		 	  	Let $S$ be a semigroup. The solutions    of the functional  equation  (\ref{78}) are the function
		 	  	of the  form	$f=A+ A(z_0)$, where $A :S \longmapsto\mathbb{C}$ is additive.
		 	  	\end{lem}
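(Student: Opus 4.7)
The plan is to prove both directions of the characterization, with the forward (sufficiency) direction being a routine verification and the converse being the substantive content.

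\textbf{Sufficiency.} First I will check that if $A : S \to \mathbb{C}$ is additive and $f := A + A(z_0)$, then
\[
f(xyz_0) = A(xyz_0) + A(z_0) = A(x) + A(y) + 2A(z_0) = f(x) + f(y),
\]
so $f$ solves (\ref{78}).

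\textbf{Necessity.} For the converse, assume $f$ satisfies (\ref{78}). The core idea is to build an additive function out of $f$ by exploiting the Kannappan-type rewriting of $xyz_0^2$. Concretely, I would apply (\ref{78}) to the element $xyz_0^2$ in two ways: once as $(xy)\cdot z_0 \cdot z_0$, which gives $f(xyz_0^2) = f(xy) + f(z_0)$, and once as $x \cdot (yz_0)\cdot z_0$, which gives $f(xyz_0^2) = f(x) + f(yz_0)$. Equating these yields the key identity
\[
f(xy) + f(z_0) = f(x) + f(yz_0), \qquad x,y \in S. \tag{$\ast$}
\]
Setting $y = z_0$ in (\ref{78}) produces $f(yz_0^2) = f(y) + f(z_0)$; combined with ($\ast$) by taking $y \to z_0$ to compute $f(xz_0)$, this lets me eliminate the term $f(yz_0)$ in ($\ast$) and obtain
\[
f(xy) = f(x) + f(y) + c, \qquad \text{where } c := f(z_0^2) - 2f(z_0).
\]

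\textbf{Finishing.} Once this ``almost-additivity'' of $f$ is in hand, I define $A(x) := f(x) + c$, and a one-line computation shows $A(xy) = A(x) + A(y)$, so $A$ is additive on $S$. It remains to identify the constant: applying additivity at $z_0^3$ gives $A(z_0^3) = 3A(z_0)$, while from the original equation $f(z_0^3) = 2f(z_0)$, so $A(z_0^3) = 2f(z_0) + c$. Comparing these two expressions for $A(z_0^3)$ forces $A(z_0) = -c$, which is exactly the relation $f = A + A(z_0)$ needed to close the argument.

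\textbf{Main obstacle.} The only real subtlety is the absence of an identity element or inverses in $S$, which prevents the standard monoid/group trick of plugging in $e$ to extract $f$ from $f(xyz_0)$. The Kannappan device of re-parenthesizing $xyz_0^2$ to produce ($\ast$) is the workhorse that substitutes for that missing structure; everything else is bookkeeping.
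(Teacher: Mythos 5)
Your proof is correct and follows essentially the same strategy as the paper's: use associativity to evaluate $f$ at a product padded with extra copies of $z_0$ in two ways, conclude that $f$ is additive up to a constant, and then pin that constant down as $-A(z_0)$. The only differences are cosmetic — you re-parenthesize $xyz_0^2$ where the paper re-parenthesizes $(xy)z_0^2z_0=(xyz_0)z_0z_0$, and you identify $A(z_0)$ by comparing two expressions for $A(z_0^3)$ rather than by substituting $A$ back into (\ref{78}).
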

		 	  \begin{proof}
		 	  	Let $x,y,z \in S $ be arbitrary. Using (\ref{78}) we find that
		 	 \begin{equation*}
		  	 	f((xy)z_0^2z_0)= f(xy)+f(z_0^2).
		 	  \end{equation*}
		 	  	Similarly we find that
		 	  \begin{equation*}
		 	  		f((xyz_0)z_0z_0)= f(xyz_0)+f(z_0)= f(x)+f(y)+f(z_0).
		 	  		\end{equation*}
		 	  			Comparing the previous equations  we get that
		 	  	 \begin{equation*}
		 	  	f(xy)=f(x)+f(y)+[f(z_0)-f(z_0^2)].
		 	    \end{equation*}
		 	  	Therefore, we see that    $A:=f+[f(z_0)-f(z_0^2)]  $ is additive.
		 	  		Now, using  (\ref{78})  and the fact that $A$ is additive  we obtain that
                 \begin{equation*}\begin{split}
		 	  		 0&=f(xyz_0)-f(x)-f(y) \\ &=A(xyz_0)-[f(z_0)-f(z_0^2)]-(A(x)  -[f(z_0)-f(z_0^2)]) \\
		 	  		&\quad-(A(y)  -[f(z_0)-f(z_0^2)])\\
		 	  		&=A(z_0)+ [f(z_0)-f(z_0^2)].
		 	  	\end{split}\end{equation*}
		 	  	Thus $ A(z_0)= -[f(z_0)-f(z_0^2)] $, and
		 	  	hence
		 	  	$f= A +A(z_0)$.
		 	  	The converse statement is trivial.
		 	  	This completes the  proof of Lemma \ref{hh}.
		 	  	\end{proof}
		 	  		\section{Solutions of the Kannappan-sine addition law on  semigroups}
		In this section we solve the Kannappan-sine addition law (\ref{3}) on semigroups. The following lemmas will be used  in the proof of Theorem \ref{t1}.

		 \begin{lem}
		 	\label{l1}
		 	Let $S$ be a semigroup, and let $f, g :S\longmapsto\mathbb{C}$ be a solution  of the functional equation (\ref{3}). Then we have the following.\\
		 (i) If $f(z_0)=0 $ then
		    \begin{equation}
		 \label{14}
	g(z_0^2)f(xy)= g(z_0)[f(x)g(y)+f(y)g(x)]-f(z_0^2)g(xy),  \hspace{0.3cm} x,y \in S.
		 \end{equation}
		 	(ii) If $f(z_0)\neq0$  then there exists $\gamma \in \mathbb{C}$ such that
		 	\begin{equation}
		 	\label{63}
		 	g(xyz_0)=g(x)g(y)+\gamma^2 f(x)f(y), \hspace{0.3cm} x,y \in S.
		 	\end{equation}
		 \end{lem}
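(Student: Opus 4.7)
The plan is to exploit associativity in the semigroup: by factoring the long words $xyz_0^3$ and $yz_0^4$ in several different ways and applying (\ref{3}) to each factorization, we obtain identities whose combination yields (\ref{14}) and (\ref{63}). Throughout, the key observation is that if $W=x'y'z_0$ for some $x',y'\in S$, then (\ref{3}) evaluates $f(W)=f(x')g(y')+f(y')g(x')$; different factorizations of the same $W$ give different expressions whose equality is the source of our identities.

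For part (i), I would compute $f(xyz_0^3)$ two ways. Taking $x':=xyz_0$, $y':=z_0$ in (\ref{3}) gives $f(xyz_0)g(z_0)+f(z_0)g(xyz_0)$; taking $x':=xy$, $y':=z_0^2$ gives $f(xy)g(z_0^2)+f(z_0^2)g(xy)$. Equating these, invoking $f(z_0)=0$ to kill the $f(z_0)g(xyz_0)$ term, and substituting $f(xyz_0)=f(x)g(y)+f(y)g(x)$ yields (\ref{14}) after rearrangement.

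For part (ii), assume $f(z_0)\neq 0$. The plan has two stages. Stage 1: derive the auxiliary identity
\[
f(z_0)\,g(xyz_0)=f(x)\bigl[g(yz_0^2)-g(y)g(z_0)\bigr]+f(z_0)\,g(x)g(y) \qquad (\ast)
\]
by equating the factorization $x':=xyz_0$, $y':=z_0$ with a third factorization of $xyz_0^3$, namely $x':=x$, $y':=yz_0^2$ (which gives $f(x)g(yz_0^2)+f(yz_0^2)g(x)$), then substituting $f(yz_0^2)=f(y)g(z_0)+f(z_0)g(y)$ (from (\ref{3}) with $x':=y$, $y':=z_0$) and $f(xyz_0)=f(x)g(y)+f(y)g(x)$; the $f(y)g(x)g(z_0)$ terms cancel cleanly on both sides, leaving $(\ast)$.

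Stage 2: show that $g(yz_0^2)-g(y)g(z_0)$ is a constant multiple of $f(y)$. To do this I would compute $f(yz_0^4)$ two ways: with $x':=y$, $y':=z_0^3$ gives $f(y)g(z_0^3)+f(z_0^3)g(y)$, and with $x':=yz_0^2$, $y':=z_0$ gives $f(yz_0^2)g(z_0)+f(z_0)g(yz_0^2)$. Equate, substitute $f(yz_0^2)=f(y)g(z_0)+f(z_0)g(y)$, and invoke the identity $f(z_0^3)=2f(z_0)g(z_0)$ (obtained by setting $x=y=z_0$ in (\ref{3})). The coefficient of $g(y)$ then collapses via $f(z_0^3)-f(z_0)g(z_0)=f(z_0)g(z_0)$, producing
\[
g(yz_0^2)-g(y)g(z_0)=\frac{g(z_0^3)-g(z_0)^2}{f(z_0)}\,f(y).
\]
Choosing $\gamma\in\mathbb{C}$ with $\gamma^2=[g(z_0^3)-g(z_0)^2]/f(z_0)^2$ and substituting this into $(\ast)$ gives (\ref{63}).

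The main obstacle is spotting the right computation for Stage 2: if one stayed at the level of $xyz_0^3$, the identity $(\ast)$ would still contain $g(y)$ implicitly through $g(yz_0^2)$, and no proportionality with $f(y)$ would emerge. Moving up one degree to $yz_0^4$, combined precisely with the identity $f(z_0^3)=2f(z_0)g(z_0)$, is what produces the cancellation of the $g(y)$-contribution and isolates a single scalar $\gamma^2$.
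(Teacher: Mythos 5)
Your argument is correct, and for part (i) as well as the first half of part (ii) it coincides with the paper's proof: the paper likewise expands $f((xyz_0)z_0z_0)$ versus $f((xy)z_0^2z_0)$ for (i), and your identity $(\ast)$ is exactly the paper's equation (\ref{54}), obtained from the same pair of factorizations of $xyz_0^3$ with the same cancellation of the $f(y)g(x)g(z_0)$ terms. The only divergence is in the final step of (ii). The paper sets $\phi(y):=[g(yz_0^2)-g(y)g(z_0)]/f(z_0)$, rewrites $(\ast)$ as $g(xyz_0)=g(x)g(y)+f(x)\phi(y)$, and then feeds this relation back into $(\ast)$ itself (in effect applying it to the pair $(y,z_0)$ to evaluate $g(yz_0^2)$) before specializing $x=z_0$, which yields $\phi(y)=cf(y)$ with $c=\phi(z_0)/f(z_0)$. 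You instead establish the proportionality by an independent two-way expansion of $f(yz_0^4)$ combined with $f(z_0^3)=2f(z_0)g(z_0)$; this is a fresh associativity computation rather than a self-substitution, slightly longer but entirely valid, and it produces the same constant $\gamma^2=[g(z_0^3)-g(z_0)^2]/f(z_0)^2$ as the paper. Either route is acceptable; the paper's is marginally more economical since it reuses the already-derived identity instead of introducing a new word.
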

		 \begin{proof}
		 (i)	Let $x,y,z\in$  $S$ be arbitrary. Using (\ref{3}) and the fact that $ f(z_0)=0$ we have
		 	\begin{equation*}
		 	f((xyz_0)z_0 z_0) = f(xyz_0)g(z_0)+f(z_0)g(xyz_0)
		 	=g(z_0)[f(x)g(y)+f(y)g(x)]
		   	\end{equation*}
		 		and
		 			\begin{equation*}
		 		f((xyz_0)z_0 z_0)	= f((xy)z_0^2 z_0)
		 			=  f(xy)g(z_0^2)+f(z_0^2)g(xy).
		 		\end{equation*}
		 		  Then we deduce that
		 		 	\begin{equation*}
		 		 	  f(xy)g(z_0^2)+f(z_0^2)g(xy)=g(z_0)[f(x)g(y)+f(y)g(x)],\hspace{0.3cm} x,y \in S.
		 		 	\end{equation*}
		 		 This proves (i).\\
		 		(ii)	Let $x,y\in S$ be arbitrary. By  using (\ref{3}) we get
		 			\begin{equation*}\begin{split}
		 		 	f((xyz_0)z_0z_0  )&=f(xyz_0)g(z_0)+f(z_0)g(xyz_0)\\
		 		 	&=g(z_0)[f(x)g(y)+f(y)g(x)]+f(z_0)g(xyz_0)\\
		 		 	&=g(z_0)f(x)g(y)+g(z_0)f(y)g(x)+f(z_0)g(xyz_0).
		 		 	\end{split}\end{equation*}
		 		 Similarly, we find
		 		 	\begin{equation*}\begin{split}
		 		 	f(x(yz_0^2)z_0 ) &= f(x)g(yz_0^2)+f(yz_0^2)g(x)\\
		 		 	&=f(x)g(yz_0^2)+g(x)[f(y)g(z_0)+f(z_0)g(y)]\\
		 		 	&=f(x)g(yz_0^2)+g(z_0)g(x)f(y)+f(z_0)g(x)g(y).
		 		 	\end{split}\end{equation*}
		 		 	Then  the associativity of the operation in $S$ implies
		 		 		\begin{equation}
		 		 		\label{54}
		 		 			 f(z_0)[g(xyz_0)-g(x)g(y)]=f(x)[g(yz_0^2)-g(y)g(z_0)].
		 		 		\end{equation}
		 		 		Since $f(z_0)\neq0$, dividing equation (\ref{54}) by $f(z_0)$ we get that
		 		 			\begin{equation}
		 		 			\label{55}
		 		 		 g(xyz_0)= g(x)g(y)+f(x)\phi(y),
		 		 			\end{equation}
		 		 			where
		 		 			 \begin{equation*}
		  				\phi(y):= \dfrac {g(yz_0^2)-g(y)g(z_0)} {f(z_0)},\hspace{0.2cm} y\in S.
		 		 			 \end{equation*}
		 		 				Now, substituting (\ref{55})  come back   into (\ref{54})  we get
		 		 				 \begin{equation*}
		 		 				f(z_0)[f(x)\phi(y)]=f(x)[f(y)\phi(z_0)].
		 		 			\end{equation*}
		 		 				By putting $x=z_0$ in the last equation, we deduce that
		 		 				 \begin{center}
		 		 				 $\phi(y):=c f(y), $  where  $c:=\dfrac{\phi(z_0)}{f(z_0) }$. \end{center}
		 		 				 Choosing $\gamma\in \mathbb{C}$ such that $\gamma^2=c$,  equation (\ref{55}) becomes
		 		 				 \begin{equation*}
		 		 				 g(xyz_0)= g(x)g(y)+ \gamma^2 f(x)f(y).
		 		 				 \end{equation*}
		 		 				 This proves part (ii) and completes the proof of Lemma \ref{l1}.
		 		 				 \end{proof}
	 		 				  \begin{lem}
		 		 				  		 	\label{l2}
		 		 				  		 	Let $S$ be a semigroup,  and let $f, g :S\longmapsto\mathbb{C}$ be a solution  of the functional equation (\ref{3}) such that $f\neq 0$  and  $f(z_0)=0$.   Then we have the following.\\
		 		 				  		 	(i)	If $f(z_0^2)\neq0$ then $g(z_0^2)=0$.\\
		 		 				  		 	 	(ii) If $\{f,g\}$ is linearly independent then   the following holds
		 		 				  		 		
		 		 				  		 		(a)  $g(z_0)\neq0.$
		 		 				  		 	
		 		 				  		 		(b)  If $f(z_0^2)=0
		 		 				  		 		$ then $ g(z_0^2)\neq0$.
		 		 				  		 		\end{lem}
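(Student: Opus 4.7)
The plan rests on the identity (\ref{14}) from Lemma \ref{l1}(i), combined with two different groupings of the triple product $xyz_0^2$ under (\ref{3}).

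For part (i), I specialize (\ref{14}) at $x=y=z_0$. Since $f(z_0)=0$, the right-hand side becomes $-f(z_0^2)g(z_0^2)$ while the left-hand side equals $f(z_0^2)g(z_0^2)$, giving $2f(z_0^2)g(z_0^2)=0$. As $f(z_0^2)\neq 0$ by hypothesis, the conclusion $g(z_0^2)=0$ follows at once.

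For part (ii)(a), I argue by contradiction, assuming $g(z_0)=0$. Applying (\ref{3}) to $xyz_0^2$ in the groupings $(xy)\cdot z_0\cdot z_0$ and $x\cdot yz_0\cdot z_0$, the first evaluates to $f(xy)g(z_0)+f(z_0)g(xy)=0$, while the second evaluates to $f(x)g(yz_0)+f(yz_0)g(x)$. Equating yields the key identity
\[
f(x)g(yz_0)+f(yz_0)g(x)=0, \qquad x,y\in S.
\]
Putting $y=z_0$ gives $f(x)g(z_0^2)+f(z_0^2)g(x)=0$ on all of $S$, forcing $f$ proportional to $g$ whenever $g(z_0^2)\neq 0$, which contradicts linear independence. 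Otherwise $g(z_0^2)=0$, and since $g\not\equiv 0$ the same relation forces $f(z_0^2)=0$. Returning to the key identity with this information, any $y_0$ with $g(y_0z_0)\neq 0$ would again produce proportionality, so $g(yz_0)=0$ for all $y\in S$; the identity then collapses to $f(yz_0)g(x)=0$, hence $f(yz_0)=0$ for all $y$. In particular $f(xyz_0)=0$ for all $x,y\in S$, so (\ref{3}) reduces to $f(x)g(y)+f(y)g(x)=0$; the diagonal $x=y$ gives $f(x)g(x)=0$, and choosing $x_0$ with $f(x_0)\neq 0$ forces $g\equiv 0$, the desired contradiction.

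Part (ii)(b) is then a clean corollary: assuming $f(z_0^2)=0$ and (for contradiction) $g(z_0^2)=0$, identity (\ref{14}) collapses to $g(z_0)\bigl[f(x)g(y)+f(y)g(x)\bigr]=0$; since $g(z_0)\neq 0$ by part (a), we recover $f(x)g(y)+f(y)g(x)=0$, and the same diagonal argument as in (a) forces $g\equiv 0$, contradicting linear independence. The main obstacle lies in the degenerate branch of (a) where $g(z_0)=g(z_0^2)=f(z_0^2)=0$ simultaneously: the single substitution $y=z_0$ no longer yields proportionality directly, and one must iterate the key identity along $Sz_0$ to force the vanishing of both $f$ and $g$ on $Sz_0$ before the sine-addition-type relation emerges and closes the argument.
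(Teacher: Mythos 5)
Your proof is correct and follows essentially the same route as the paper's: the same key identity $f(x)g(yz_0)+f(yz_0)g(x)=0$ obtained from the two groupings of $xyz_0^2$ in part (ii)(a), the same use of (\ref{14}) in part (ii)(b), and for part (i) the same two-way evaluation on powers of $z_0$ (you merely package it as the specialization $x=y=z_0$ of (\ref{14}), where the paper computes $f(z_0^5)$ in two ways directly). The only cosmetic difference is that in (ii)(a) the paper reads off $f(yz_0)=g(yz_0)=0$ for all $y$ in one step from the linear independence of $\{f,g\}$, whereas you reach the same conclusion through a stepwise case analysis on proportionality.
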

		 		 				  		 \begin{proof}
		 		 				  		 	(i) Assume that  $f(z_0)=0$   and letting $x=y=$  $z_0$    in (\ref{3}) we get that
		 		 				  		 	\begin{equation*}
 		 		 				  		 		f(z_0^3)=  f(z_0)g(z_0)+f(z_0)g(z_0)=0.
		 		 				  		 	 	\end{equation*}
		 		 				  		 Similarly, letting $x=z_0^2$ and $y=z_0^2$ and then $x = z_0$ and $y = z_0^3
		 		 				  		 $ in (\ref{3}) yields
		 		 				  		 	 \begin{equation*}\begin{split}
		 		 				  		 		f(z_0^5)&=2f(z_0^2)g(z_0^2)\\
		 		 				  		 			& =f(z_0)g(z_0^3)+f(z_0^3)g(z_0) =0.
		 		 				  		 	\end{split}\end{equation*}
		 		 				  		 	Since $f(z_0^2)\neq0,$ we deduce that 	$g(z_0^2)=0$.\\
		 		 				  		 	(ii)-
		 		 				 (a) Assume  that $g(z_0)=0$. Making    the substitutions  $(x,yz_0 )$ and  $(x y,z_0 )$ in (\ref{3}) we get respectively  that
		 		 				  		 	\begin{equation*}
		 		 				  		 		f(xyz_0z_0)= f(x )g(yz_0)+f(yz_0)g(x) =f(xy)g(z_0)+f(z_0)g(xy)=0,
		 		 				  		 	\end{equation*}
		 		 				  		 which implies that
		 		 				  			\begin{equation*}
		 		 				  		 	f(x )g(yz_0)+f(yz_0)g(x)=0,  \; x,y\in S.
		 		 				  		 \end{equation*}
		 		 				  		 	Since $f$ and $g$ are linearly independent we get in particular that  $ f(yz_0)=0$ for all $y\in S$.  Now, by  (\ref{3}) we have
		 		 				  		 			\begin{equation}
		 		 				  		 			\label{03}
		 		 				  		 				f(x)g(y)=-f(y)g(x), \hspace{0.3cm} x,y\in S.
		 		 				  		 			\end{equation}
		 		 				  		 			Applying \cite[Exercice 1.1(b)]{g} we get  $f=0$ or $g=0$,  which is a contradiction because  $f$ and $g$ are linearly independent. So $g(z_0)\neq0$.\\
		 		 				  		 (ii)-(b) Assume, for the sake of contradiction, that  $g(z_0^2)=0$. Since $f(z_0)=0$, and $f(z_0^2)=0$ by hypothesis,  equation (\ref{14})  reduces to
		 		 				  		 \begin{center}
		 		 				  		$
		 		 				  		g(z_0) f(x)g(y)+g(z_0)f(y)g(x)=0,$ $x,y\in S,$
		 		 				  			\end{center}
		 		 				  		 which can be written as follows
		 		 				  		 \begin{center}
		 		 				  		$
		 		 				  		    f(x)g(y)=- f(y)g(x),$ $x,y\in S,$
		 		 				  		
		 		 				  		\end{center}
		 		 				  		since $g(z_0)\neq0$ by  (b)-(i). Then  $f=0$, which contradicts the fact that $f $ and $g$ are linearly independent. So $g(z_0^2) \neq0$. This completes the proof of Lemma  \ref{l2}.\end{proof} In the following Proposition we solve a special case of Kannappan-sine addition law on semigroups, namely
		 		 				  	\begin{equation}
		 		 				  	\label{bb}
		 		 				  	f(xyz_0)=\chi (z_0)f(x)\chi(y)+ \chi (z_0)f(y)\chi(x), \;x,y \in S,
		 		 				  	\end{equation}
		 		 				  	where $\chi $ is an exponential   function on $S$.
		 		 				   Proposition \ref{t2} is a generalization of Ebanks's work   on the special case
		 		 				  	of the sine addition law on monoids $(z_0=e)$ \cite[Theorem 3.1]{c}.
		 		 				  		 \begin{prop}
		 		 				  		 		\label{t2}
		 		 				  		 				Let $S$ be a semigroup  and   $\chi :S\longmapsto\mathbb{C}$ be an exponential  function such that $\chi(z_0)\neq0$.   If  $f :S\longmapsto\mathbb{C}$ is a solution of   functional equation (\ref{bb}), then
		 		 				  		 					\begin{equation}
		 		 				  		 				\label{15}
		 		 				  		 		 	f(x)=
		 		 				  		 				\begin{cases}
		 		 				  		 				\chi(x)(A(x)+A(z_0)) \hspace{0.5cm} for\hspace{0.3cm} x\in S \backslash I_{\chi} \\
		 		 				  		 				\rho(x)\hspace{2.8 cm} for\hspace{0.3cm} x\in    P_{\chi} \\
		 		 				  		 				0 \hspace{3.3 cm}for\hspace{0.4cm} x\in I_{\chi}\setminus P_{\chi},
		 		 				  		 				\end{cases}
		 		 				  		 				\end{equation}
		 		 				  		 				 where $A: S \setminus$
		 		 				  		 						$I_{\chi}\longrightarrow \mathbb{C}$
		 		 				  		 				is additive and    $\rho: P_{\chi}\longrightarrow \mathbb{C}$  is the restriction of $f$ to $P_{\chi} $.\\In addition,    $f$  is abelian and  satisfies the following conditions:\\
		 		 				  		 				(I) $f(xy) = f(yx)=0 $ for all $ x \in I_{\chi} \setminus P_{\chi} $ and $ y \in S \setminus I_{\chi}$.\\
		 		 				  		 				(II) If $x \in \{up, pv, upv\}
		 		 				  		 				$ with $p \in P_{\chi} $ and $u,v \in S \setminus I_{\chi}$, then  $x \in P_{\chi} $ and we have respectively
		 		 				  		 				$\rho(x)=	\rho(p) \chi(u)$, $\rho(x)=	\rho(p) \chi(v)$ or $\rho(x)=	\rho(p) \chi(uv)$.\\Conversely, the  function f  of the form (\ref{15}) define a solution of (\ref{bb}).\\Moreover, if $S$ is a topological semigroup and $f \in C(S)$, then   $\chi \in C(S), A \in C(S \setminus  I_{\chi})$
		 		 				  		 			   and
		 		 				  		 				$\rho \in C(P_{\chi})$.\end{prop}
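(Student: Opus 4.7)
The plan is to dissect $f$ according to the partition $S = (S\setminus I_\chi) \cup P_\chi \cup (I_\chi \setminus P_\chi)$, determine it on each piece, and then verify the cross-compatibility conditions (I) and (II). On the subsemigroup $S \setminus I_\chi$ I would set $F := f/\chi$, which is well defined since $\chi$ is nowhere zero there. Dividing (\ref{bb}) by $\chi(x)\chi(y)\chi(z_0) \neq 0$ converts it into the additive Kannappan equation $F(xyz_0) = F(x) + F(y)$ on $S\setminus I_\chi$; Lemma \ref{hh} then yields $F = A + A(z_0)$ for some additive $A: S\setminus I_\chi \to \mathbb{C}$, which is the first line of (\ref{15}).

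To prove $f \equiv 0$ on $I_\chi \setminus P_\chi$ I would use a double-grouping trick: for $x \in I_\chi \setminus P_\chi$, I pick $u \in S\setminus I_\chi$ and compare two different applications of (\ref{bb}) to the same product so as to equate $\chi(z_0)\chi(u) f(x)$ with something already known to vanish. For instance, if $x = ab \in I_\chi^2$, the evaluation $f(u(ab)z_0) = \chi(z_0)\chi(u)f(x)$ must agree with $f((ua)(b)z_0) = 0$ (both $\chi(ua)$ and $\chi(b)$ vanish). The cases $ux, xv \in I_\chi^2$ go the same way, reducing to $f(abz_0) = 0$. The case $uxv \in I_\chi^2$ is subtler: comparing $f((ux)vz_0) = \chi(z_0)\chi(v)f(ux)$ and $f(u(xv)z_0) = \chi(z_0)\chi(u)f(xv)$ with $f(uxvz_0) = 0$ first gives $f(ux) = f(xv) = 0$, and then a further double-grouping of $f(uxwz_0)$ at the pairs $(u, xw)$ and $(ux, w)$ produces $f(xw) = 0$ for every $w \in S\setminus I_\chi$; specializing $w = z_0^2$ and combining with the identity $f(xz_0^2) = \chi(z_0)^2 f(x)$ obtained from (\ref{bb}) at $(x, z_0)$ yields $f(x) = 0$. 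Property (I) then follows from the same identity $f(xyz_0^2) = \chi(z_0)^2 f(xy)$ (valid because $\chi(xy) = 0$) combined with the alternative expansion $f(x(yz_0)z_0) = 0$, which uses $f(x) = 0 = \chi(x)$.

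Property (II) requires two steps. Lemma \ref{eb} gives the membership $up \in P_\chi$; to obtain $\rho(up) = \chi(u)\rho(p)$ I would first establish the auxiliary formula $\rho(pz_0) = \chi(z_0)\rho(p)$ by equating the two evaluations of $f(pz_0^3)$, one via (\ref{bb}) at $(p, z_0^2)$ and one via (\ref{bb}) at $(pz_0, z_0)$. Then equating the expansions of $f(upz_0^2)$ at $(up, z_0)$ and at $(u, pz_0)$ gives $\chi(z_0)^2 \rho(up) = \chi(z_0)\chi(u)\rho(pz_0) = \chi(u)\chi(z_0)^2 \rho(p)$, i.e.\ $\rho(up) = \chi(u)\rho(p)$. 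The other two assertions of (II) are symmetric. Abelianness follows by a case analysis on how many factors of an $n$-fold product lie in $I_\chi$, using the abelianness of $\chi$ and $A$ together with (II), and the converse is a direct substitution. Continuity of $\chi$, $A$, and $\rho$ when $f \in C(S)$ is immediate from the non-vanishing of $\chi$ on $S\setminus I_\chi$ and the defining formulas.

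The hardest step is expected to be the case $uxv \in I_\chi^2$ in the vanishing argument: equation (\ref{bb}) only sees $f$ on products of the shape $\bullet\bullet z_0$, so recovering $f(x) = 0$ from information living only on a five-fold product forces two successive regroupings and then use of $z_0^2$ as an auxiliary multiplier in $S\setminus I_\chi$. This is precisely where the absence of an identity element and of inverses costs more than in the monoid or group setting.
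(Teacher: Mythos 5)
Your main line runs parallel to the paper's: reduce (\ref{bb}) on the subsemigroup $S\setminus I_{\chi}$ to the additive Kannappan equation and invoke Lemma \ref{hh}; kill $f$ on $I_{\chi}\setminus P_{\chi}$ by regrouping products padded with $z_0$'s; and obtain (I) and (II) by further regroupings. Your treatment of the case $uxv\in I_{\chi}^{2}$ is longer than the paper's (which gets $f(x)\chi(uv)\chi(z_0)^2=0$ in a single comparison of $f((uxv)z_0z_0)$ with $f(u(xvz_0)z_0)$, using $f(uxv)=0$), but it is correct, provided you say explicitly why $f(uxvz_0)=0$ (namely $uxvz_0\in I_{\chi}^{2}$, where $f$ is already known to vanish).

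Two places in your sketch are genuinely incomplete. First, the abelianness argument. Deriving abelianness from formula (\ref{15}) by counting how many factors of $x_1\cdots x_n$ lie in $I_{\chi}$ breaks down in the sub-case of exactly one factor $p\in I_{\chi}\setminus P_{\chi}$ flanked on both sides by elements of $S\setminus I_{\chi}$: you need $f(upv)=0$, and condition (I) only gives $f(up)=f(pv)=0$; nothing you have established tells you whether $upv\in P_{\chi}$ or what $f(upv)$ is. This is fixable by one more regrouping ($f(upvz_0^2)=\chi(z_0)^2f(upv)$ versus $f(u(pvz_0)z_0)=\chi(z_0)^2\chi(u)\chi(v)f(p)=0$), but as written your case analysis does not close; the paper sidesteps the issue by proving $f(xyu)=f(xuy)$ and $f(xy)=f(yx)$ for all $x,y,u\in S$ directly from (\ref{bb}), without reference to the solution formula. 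Second, the continuity of $\chi$ is not ``immediate from the defining formulas'': $\chi$ is a datum of the statement, not something you constructed from $f$, so to conclude $\chi\in C(S)$ you must isolate it from the equation, e.g. $\chi(x)=\bigl(f(xaz_0)-f(x)\chi(a)\chi(z_0)\bigr)/\bigl(f(a)\chi(z_0)\bigr)$ at a point $a$ with $f(a)\neq 0$; only after that does $A=f/\chi-A(z_0)$ inherit continuity on $S\setminus I_{\chi}$.
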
 \begin{proof} If   $I_{\chi}=\emptyset$ then  $\chi$ vanishes nowhere, so the functional equation (\ref{bb}) reduces to    	\begin{equation}
		 		 				  		 				\dfrac{f(xyz_0)}{\chi(xyz_0)}=\dfrac{f(x)}{\chi(x)}+\dfrac{f(y)}{\chi(y)}, \hspace{0.4cm}      x,y \in S.
		 		 				  		 				\end{equation} According to Lemma \ref{hh} we get that $f/\chi =:A+A(z_0)$ where $A: S   \longrightarrow \mathbb{C}$ is additive. Then we deduce that $f=\chi(A+A(z_0))$. This is a special case of (\ref{15}) (here the middle and the bottom lines of (\ref{15}) are  disappearing). So from now on we may assume that  $I_{\chi}\neq\emptyset$.
		 		 				  		  		 For  all $ x,y \in S\setminus I_{\chi}$ we have $\chi(xyz_0)=\chi(x)\chi(y)\chi(z_0)\neq0$,  because $ \chi(z_0)\neq 0$. Therefore, dividing   equation (\ref{bb}) by $ \chi(xyz_0)$ we get that
		 		 				  		  		 \begin{equation*}
		 		 				  		  		 \dfrac{f(xyz_0)}{\chi(xyz_0)}=\dfrac{f(x)}{\chi(x)}+\dfrac{f(y)}{\chi(y)}, \hspace{0.4cm}      x,y \in S\setminus I_{\chi}.
		 		 				  		  		 \end{equation*}
		 		 				  		  		 By Lemma \ref{hh} we deduce that $f/\chi=:A+A(z_0)$, where $A: S\setminus I_{\chi}\longrightarrow \mathbb{C}$ is additive, then $f= \chi(A+A(z_0))$ and
		 		 				  		  		 we have the top line of (\ref{15}).

		 		 				  		  		 Let $x,y   \in  I_{\chi}$. Using (\ref{bb}) we find that
		 		 				  		  		 \begin{equation}
		 		 				  		  		 \label{ccc}
		 		 				  		  		 f((xy)(z_0)z_0)= f(xy)\chi(z_0)\chi(z_0)+f(z_0)\chi(xy)\chi(z_0)=f(xy)\chi(z_0)^2
		 		 				  		  		 \end{equation}
		 		 				  		  		 and
		 		 				  		  		 \begin{equation}
		 		 				  		  		 \label{nnm}
		 		 				  		  		 f(x (yz_0)z_0)= f(x)\chi(y)\chi(z_0)\chi(z_0)+f(yz_0)\chi(x)\chi(z_0)=0,
		 		 				  		  		 \end{equation}
		 		 				  		  		 since  $\chi(xy)=\chi(x)=\chi(y)=0$. By using (\ref{ccc}), (\ref{nnm}) and the associativity of the operation in  $S$ we have
		 		 				  		  		 $
		 		 				  		  		  f(xy)\chi(z_0)^2= 0,
		 		 				  		  		 $
		 		 				  		  		 which gives $f(xy)=0$ since $ \chi(z_0)\neq0$.  This proves the case $x\in I_{\chi}^2$  of the third line of formula (\ref{15}).
		 		 				  		  		
		 		 				  		  		 Next suppose $x\in I_{\chi} \setminus(I_{\chi}^2 \cup P_{\chi})$. Then there exist $u,v \in S \setminus I_{\chi}$ such that one of $ux, vx$ or $ uxv \in I_{\chi}^2$.    For $ux \in I_{\chi}^2 $, by using   (\ref{bb}) again we have
		 		 				  		  		 \begin{equation*}\begin{split}
		 		 				  		  		 	f(z_0(ux)z_0)&=f(z_0)\chi(u)\chi(x)\chi(z_0)+f(ux)\chi(z_0)\chi(z_0)= 0\\
		 		 				  		  		 	&=f((z_0u)(x)z_0)=f(z_0u)\chi(x)\chi(z_0)+f(x)\chi(z_0u)\chi(z_0)\\
		 		 				  		  		 	&=f(x)\chi(u)\chi(z_0)^2,
		 		 				  		  		\end{split}\end{equation*}
		 		 				  		  		 because $f(ux)=\chi(x)=0$. It follows  that $ f(x)\chi(u)\chi(z_0)^2 =0$ and hence $f(x)=0$ since $\chi(u)\neq0$ and $\chi(z_0) \neq0$. The case $xv \in I_{\chi}^2 $ can be treated similarly. For the case $uxv \in I_{\chi}^2 $, we have
		 		 				  		  		\begin{equation*}\begin{split}
		 		 				  		  		 	f( (uxv)(z_0)z_0)&=f(uxv)\chi(z_0) \chi(z_0)+f( z_0)\chi(uv)\chi(x)\chi(z_0)= 0\\
		 		 				  		  		 	&=f(( u)(xvz_0)z_0)=f( u)\chi(xvz_0)\chi(z_0)+f(xvz_0)\chi( u)\chi(z_0) \\
		 		 				  		  		 	&=f(u)\chi(vz_0)\chi(x)\chi(z_0)+\chi(uz_0)[f(x)\chi( v)\chi( z_0)\\&\quad+f(v)\chi( x)\chi( z_0)]\\
		 		 				  		  		 	&=f(x)\chi(uv)\chi(z_0)^2,
		 		 				  		  		 \end{split}\end{equation*}
		 		 				  		  		 because $f(uxv)=\chi(x)=0$. Consequently $f(x)=0$ since $\chi(uv)\neq0$ and $\chi(z_0) \neq0$.
		 		 				  		  		 Then the proof of the third line of (\ref{15}) is completed. For the bottom line of (\ref{15}) there is nothing left to prove.

		 		 				  		  		 For the condition (I), let   $x\in  I_{\chi}  \setminus P_{\chi}$ and $y\in  S \setminus I_{\chi}$. Using (\ref{bb}) we have
		 		 				  		  		 \begin{equation*}\begin{split}
		 		 				  		  		 	f( (xy)z_0z_0)&=f(xy)\chi(z_0)\chi(z_0)  +f( z_0)\chi(xy) \chi(z_0)=f(xy) \chi(z_0)^2  \\
		 		 				  		  		 	&=f( (x)(yz_0)z_0)=f( x)\chi(yz_0)\chi(z_0)+f( yz_0)\chi( x)\chi(z_0) \\
		 		 				  		  		 	&=0,
		 		 				  		  		 \end{split}\end{equation*}
		 		 				  		  		 which gives $f(xy) =0$, since $f(x)=0$ and $\chi(z_0) \neq0$. The verification of $f(yx) =0$ is similar.
		 		 				  		  		
		 		 				  		  		 For the condition (II), we begin  by   the case $x=up$ with $p\in  P_{\chi}$ and $u \in  S \setminus I_{\chi}$. Then by (\ref{bb}) we have
		 		 				  		  		 \begin{equation*}\begin{split}
		 		 				  		  		 	f( xz_0^2z_0)&=f(x )\chi(z_0^2)\chi(z_0)  +f( z_0^2)\chi(x ) \chi(z_0)=f(x ) \chi(z_0)^3\\
		 		 				  		  		 	&=f( (u)(pz_0^2)z_0)=f( u)\chi(pz_0^2)\chi(z_0)+f( pz_0^2)\chi( u)\chi(z_0)\\
		 		 				  		  		 	&=f( u)\chi(p) \chi(z_0)^3+ \chi( u)\chi(z_0)[f(p)\chi( z_0)\chi(z_0)\\
                                                    &\quad+f( z_0)\chi( p)\chi(z_0)]\\
		 		 				  		  		 	&=\rho(p) \chi( u) \chi( z_0)^3,
		 		 				  		  		  \end{split}\end{equation*}
		 		 				  		  		 because  $\chi(p)=0$. Therefore $f(x)=\rho(p) \chi( u)$ since $ \chi(z_0)\neq0$. In addition we have for any  $z, t \in  S \setminus I_{\chi}$, that $zx=zup=(zu)p, \; xt=upt= (up)t, \; zxt=zupt=(zu)pt   \in  I_{\chi} \setminus I_{\chi}^2 $, so $x\in  P_{\chi}$ and  $\rho(x)=\rho(p) \chi( u )$. The case $x=pv$ can be treated similarly.
		 		 				  		  		 For the case $x=upv$ with $p\in  P_{\chi}$ and $u,v \in  S \setminus I_{\chi}$, we have
		 		 				  		  		 \begin{equation*}\begin{split}
		 		 				  		  		 	f( x(z_0^2)z_0)&=f(x )\chi(z_0^2)\chi(z_0)  +f( z_0^2)\chi(x ) \chi(z_0)=f(x ) \chi(z_0)^3  \\
		 		 				  		  		 	&=f( (u)(pvz_0^2)z_0)=f( u)\chi(pvz_0^2)\chi(z_0)+f( pvz_0^2)\chi( u)\chi(z_0) \\
		 		 				  		  		 	&=f( u)\chi(pv) \chi(z_0)^3+ \chi( u)\chi(z_0)[f(p)\chi(v z_0)\chi(z_0)\\
                                                    &\quad+f(v z_0)\chi( p)\chi(z_0)] \\
		 		 				  		  		 	&=\rho(p) \chi( u)\chi( v) \chi( z_0)^3,
		 		 				  		  		\end{split}\end{equation*}
		 		 				  		  		 because $ \chi(p)=0$. Then  $f(x)=\rho(p) \chi( u)\chi( v)$ since  $\chi( z_0)\neq0$. In addition we have also for any $z, t \in  S \setminus I_{\chi}$, that $zx=zupv=(zu)pv,  xt=up(vt)= (upv)t,  zxt=zupvt=(zu)p(vt)   \in  I_{\chi}\setminus I_{\chi}^2 $, then $x\in  P_{\chi}$. Therefore  $\rho(x)=\rho(p) \chi( u  v)$   and this completes the proof of condition (II) of Proposition (\ref{t2}).
		 		 				  		  		
		 		 				  		  		 Conversely, suppose that $f:S\longrightarrow \mathbb{C}$ satisfies (\ref{15}),  $A:S\setminus I_{\chi}\longrightarrow \mathbb{C}$ is additive, $\rho :P_{\chi}\longrightarrow \mathbb{C}$ and conditions (I) and (II)  hold. We shall prove that $f $ satisfy (\ref{hh}). The case $x, y \in S\setminus I_{\chi}$  is straightforward
		 		 				  		  		 so we omit it. For  $x \in I_{\chi}\setminus  P_{\chi}$  and $y \in S\setminus I_{\chi}$ we have $yz_0 \in S\setminus I_{\chi}$. By condition (I) we have
		 		 				  		  		 \begin{equation*}
		 		 				  		  		 f(xyz_0)=f(x(yz_0))=0=f(x)\chi(y)\chi(z_0)+f(y)\chi(x)\chi(z_0),
		 		 				  		  		 \end{equation*}
		 		 				  		  		 since $f(x)=\chi(x)=0$. The case  $y \in I_{\chi\setminus} P_{\chi}$  and $x \in S\setminus I_{\chi}$ can be treated similarly.
		 		 				  		  		
		 		 				  		  		 For  $x \in P_{\chi}$  and $y \in S\setminus I_{\chi}$ we   have   $yz_0 \in S\setminus I_{\chi}$ and then by Lemma \ref{eb} we get $xyz_0 \in P_{\chi}$, then
		 		 				  		  		 \begin{equation*}\begin{split}
		 		 				  		  		 	f(xyz_0)&=\rho(x(yz_0))=\rho(x) \chi(yz_0)= \rho(x)\chi(y)\chi(z_0)\\
                                                    &=f(x)\chi(y)\chi(z_0)+f(y)\chi(x)\chi(z_0),
		 		 				  		  		 \end{split}\end{equation*}
		 		 				  		  		 since $ \chi(x)=0$.  The case  $y \in   P_{\chi}$  and $x \in S\setminus I_{\chi}$ also can be treated similarly.
		 		 				  		  		
		 		 				  		  		 Now,  for    $x\in I_{\chi}$  , $y\in I_{\chi}$ we have $xyz_0=x(yz_0) \in I_{\chi}^2 \subset  I_{\chi}\setminus P_{\chi}$. Then
		 		 				  		  		 \begin{equation*}
		 		 				  		  		 f(xyz_0)= 0=f(x)\chi(y)\chi(z_0)+f(y)\chi(x)\chi(z_0)
		 		 				  		  		 \end{equation*}
		 		 				  		  		 since $\chi(x)=\chi(y)=0$. Which proves   (\ref{bb}).
		 		 				  		  		
		 		 				  		  		  Next we prove that $f$ is abelian.  By computing $f((xyu)z_{0}z_{0})$ in two different ways, and using (\ref{bb}) we have for all  $x,y,u\in S$, by
		 		 				  		  \begin{equation*}\begin{split}
		 		 				  		  		 	f((xyu)z_{0}z_{0})&=f(xyu)\chi(z_0)\chi(z_0)+f(z_0)\chi(xyu)\chi(z_0)\\
		 		 				  		  		 	&=f(x)\chi(yuz_0)\chi(z_0)+f(yuz_0)\chi(x)\chi(z_0)\\
		 		 				  		  		 	&=\chi(x)\chi(z_0)[f(y)\chi(u)\chi(z_0)+f(u)\chi(y)\chi(z_0)]\\&\quad+f(x)\chi(yuz_0)\chi(z_0)\\
		 		 				  		  		 	&=\chi(x)\chi(z_0)[f(u)\chi(y)\chi(z_0)+f(y)\chi(u)\chi(z_0)]\\&\quad+f(x)\chi(uyz_0)\chi(z_0)\\
		 		 				  		  		 	&=f(x)\chi(uyz_0)\chi(z_0)+f(uyz_0)\chi(x)\chi(z_0)\\
		 		 				  		  		 	&=f(xuyz_0z_0)\\
		 		 				  		  		 	&=f(xuy)\chi( z_0)\chi(z_0)+f( z_0)\chi(xuy)\chi(z_0),
		 		 				  		  		 \end{split}\end{equation*}
		 		 				  		  		 then we deduce that  $f(xyu)\chi(z_0)\chi(z_0)=f(xuy)\chi( z_0)\chi(z_0)$ which implies $f(xyu)=f(xuy)$ for all $x,y,u \in S$, since $\chi(z_0)\neq0$. Next we have
		 		 				  		  		   \begin{equation*}\begin{split}
		 		 				  		  		   	f((xy)z_0^2 z_{0})&=f(xy)\chi(z_0^2)\chi(z_0)+f(z_0^2)\chi(xy)\chi(z_0)\\
		 		 				  		  		   	&=f(xyz_0)\chi(z_0)\chi(z_0)+f( z_0)\chi(xyz_0)\chi(z_0)\\
		 		 				  		  		   	&= f(yxz_0)\chi( z_0)\chi(z_0)+f(z_0) \chi(yxz_0)\chi(z_0)\\
		 		 				  		  		   	&=f((yx)z_0^2 z_{0})\\
		 		 				  		  		   	&=f(yx)\chi( z_0^2)\chi(z_0)+f(z_0^2)\chi(yx)\chi(z_0,)\\
		 		 				  		  		   \end{split}\end{equation*}
		 		 				  		  		 which implies that
		 		 				  		  		 \begin{equation*}
		 		 				  		  		 f(xy)\chi(z_0^2)\chi(z_0)=f(yx)\chi( z_0^2)\chi(z_0),
		 		 				  		  		    \end{equation*}
		 		 				  		  		   and therefore we obtain $ f(xy)= f(yx)$  for all $x,y \in S$. This proves
		 		 				  		  		     the centrality of $f$. Thus $f$ is abelian.
		 		 				  		  		
		 		 				  		  		     Finally, suppose that $S$ is a topological semigroup.      Putting $y=a$ in (\ref{bb}) such that $f(a)\neq0$   (because $f\neq 0$ by assumption) we get that
		 		 				  		  		  \begin{equation*}
		 		 				  		  		  \chi(x)= \frac{f(xaz_0)-f(x)\chi(a)\chi(z_0)}{f(a)\chi(z_0)}
		 		 				  		  		    \end{equation*}
		 		 				  		  		  since $\chi(z_0)\neq 0$.  This  gives the continuity of $\chi$. Therefore $A:=f/\chi- A(z_0)$ is continuous on $S\setminus I_{\chi}$ since $\chi\neq 0 $ on $S\setminus I_{\chi}$. Here also  $\rho $ is continuous by restriction. This completes the proof of Proposition \ref{t2}. \end{proof}Now, we are ready to describe the solutions of the functional equation (\ref{3}) on semigroups.
		 \begin{thm}
		 	\label{t1}
		     The solutions $f,g:S\longrightarrow \mathbb{C}$ of the functional equation (\ref{3}) are the following pairs of   functions\\
		     (1) $f=0$ and $g$ arbitrary. \\
		     (2) There exist a constant $b\in \mathbb{C}^*$ and an exponential function $\chi $   on $S$ with $\chi(z_0)\neq0$
		      such that
		      \begin{equation*}
		      f=\dfrac{\chi(z_0)}{2b}\chi  \hspace{0.3cm}  and \hspace{0.3cm} g= \dfrac{\chi(z_0)}{2}\chi.
		       \end{equation*}
		     (3) There exist a constant $\delta \in \mathbb{C}^*$ and two different  exponentials functions   $\chi_1$ and $\chi_2$  on $S$
		      with $\chi_2(z_0)=-\chi_1(z_0) \neq0$ such that
		      \begin{equation*}
		      f= \delta (\chi_1+\chi_2)   \hspace{0.3cm}  and \hspace{0.3cm} g=\dfrac{\chi_1(z_0)}{2}(\chi_1-\chi_2).
		      \end{equation*}
		     (4) There exist a constant $c \in \mathbb{C}^*$ and two different  exponentials functions $\chi_1$
		      and $\chi_2$ on $S$ with $\chi_2(z_0)=\chi_1(z_0)\neq0$ such that
		       \begin{equation*}
		      f= c(\chi_1-\chi_2)   \hspace{0.3cm}  and \hspace{0.3cm}  g=\dfrac{\chi_1(z_0)}{2} (\chi_1 + \chi_2).
		       \end{equation*}
		      (5) There exist a constant $\gamma \in \mathbb{C}^*$ and two different exponentials functions $\chi_1$  and $\chi_2$  on $S$ with $\chi_1(z_0) \neq0$,
		         $\chi_2(z_0)\neq0$ and  $\chi_1(z_0)\chi_1 \neq  \chi_2(z_0)\chi_2$  such that
		        \begin{equation*}
		       f= \dfrac{\chi_1(z_0)\chi_1-\chi_2(z_0)\chi_2}{2\gamma }   \hspace{0.3cm}  and \hspace{0.3cm}  g=\dfrac{\chi_1(z_0)\chi_1+\chi_2(z_0)\chi_2}{2}.
		       \end{equation*}
		         (6)   $S\neq S^2z_0 $    and we have
		         \begin{equation*}
		         g=0 \;  and  \;
		            f(x)=
		          \begin{cases}
		        f_{z_0}(x)\; for\; x\in S  \setminus S^2z_0 \\
		          0\; for\; x\in   S^2z_0,
		          \end{cases}
		           \end{equation*}
		          where   $f_{z_0}:S  \setminus S^2z_0  \longrightarrow \mathbb{C}$  is an arbitrary non-zero function.\\
		         (7) There exist    an  exponential  function $\chi$ on $S$, an additive
		         function $A: S \setminus$$I_{\chi}\longrightarrow \mathbb{C}$
		         and a function $\rho: P_{\chi}\longrightarrow \mathbb{C}$ with $\rho$ is the restriction of $f$ to $P_{\chi} $ and
		         $\chi(z_0)\neq0$  such that
		         \begin{equation*}
		    		          g=\chi(z_0)\chi \; and \;
		         f(x)=
		         \begin{cases}
		         \chi(x)(A(x)+A(z_0)) \hspace{0.5cm} for\hspace{0.3cm} x\in S \backslash I_{\chi} \\
		         \rho(x)\hspace{3cm} for\hspace{0.3cm} x\in    P_{\chi} \\
		         0 \hspace{3.5 cm}for\hspace{0.3cm} x\in I_{\chi}\setminus P_{\chi}.
		         \end{cases}
		         \end{equation*}
		
		          In addition, in the case (7)    $f$ satisfies the following conditions:\\
		          (I) $f(xy) = f(yx)=0 $ for all $ x \in I_{\chi} \setminus P_{\chi} $ and $ y \in S \setminus I_{\chi}$.\\
		          (II) If $x \in \{up, pv, upv\}
		          		$ with $p \in P_{\chi} $ and $u,v \in S \setminus I_{\chi}$, then  $x \in P_{\chi} $ and we have respectively
		          			$\rho(x)=	\rho(p) \chi(u)$, $\rho(x)=	\rho(p) \chi(v)$ or $\rho(x)=	\rho(p) \chi(uv)$.
		          			
		          			 Moreover if $S$ is a topological semigroup and $f \in C(S)$, then $g, \chi,\chi_1, \chi_2 \in C(S), A \in C(S \setminus  I_{\chi})$,
		          				 $f_{z_0}\in C(S  \setminus S^2z_0)$ and
		          					$\rho \in C(P_{\chi})$.\\ Note that $f$ and $g$ are abelian in  cases (2)-(5) and (7).
		          	\end{thm}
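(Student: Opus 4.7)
The plan is to split the analysis along two dichotomies: whether $\{f,g\}$ is linearly dependent, and (in the independent case) the values of $f$ at $z_{0}$ and $z_{0}^{2}$. The easy branches are dispatched first. If $f=0$, equation (\ref{3}) holds for every $g$, which is case (1). Otherwise, assume $\{f,g\}$ is linearly dependent and write $g=\lambda f$. The choice $\lambda=0$ forces $f\equiv 0$ on $S^{2}z_{0}$ without $f$ vanishing identically, giving case (6). The choice $\lambda\neq 0$ reduces (\ref{3}) to $(2\lambda f)(xyz_{0})=(2\lambda f)(x)(2\lambda f)(y)$, and Stetk\ae r's \cite[Proposition 16]{i} produces an exponential $\chi$ with $\chi(z_{0})\neq 0$ and $2\lambda f=\chi(z_{0})\chi$, yielding case (2) after setting $b:=\lambda$.

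Assume now $\{f,g\}$ is linearly independent. If $f(z_{0})\neq 0$, Lemma~\ref{l1}(ii) supplies $\gamma\in\mathbb{C}$ with $g(xyz_{0})=g(x)g(y)+\gamma^{2}f(x)f(y)$. When $\gamma=0$, linear independence prevents $g\equiv 0$, so Stetk\ae r yields $g=\chi(z_{0})\chi$ with $\chi(z_{0})\neq 0$; substituting this into (\ref{3}) shows that $f$ solves (\ref{bb}), and Proposition~\ref{t2} directly delivers case (7). When $\gamma\neq 0$, the functions $G_{\pm}:=g\pm\gamma f$ both satisfy $G_{\pm}(xyz_{0})=G_{\pm}(x)G_{\pm}(y)$, and neither can vanish (else $\{f,g\}$ would be dependent); Stetk\ae r therefore produces exponentials $\chi_{1},\chi_{2}$ with $\chi_{i}(z_{0})\neq 0$ and $G_{i}=\chi_{i}(z_{0})\chi_{i}$. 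Inverting these relations gives $g=(\chi_{1}(z_{0})\chi_{1}+\chi_{2}(z_{0})\chi_{2})/2$ and $f=(\chi_{1}(z_{0})\chi_{1}-\chi_{2}(z_{0})\chi_{2})/(2\gamma)$, with the linear independence of $\{f,g\}$ forcing $\chi_{1}\neq\chi_{2}$ and $\chi_{1}(z_{0})\chi_{1}\neq \chi_{2}(z_{0})\chi_{2}$, which is case (5).

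The remaining branch is $\{f,g\}$ linearly independent with $f(z_{0})=0$. Lemma~\ref{l2}(ii)(a) supplies $g(z_{0})\neq 0$, and Lemma~\ref{l1}(i) collapses to a classical sine-addition identity (\ref{1}) for an appropriately rescaled pair. If $f(z_{0}^{2})=0$, Lemma~\ref{l2}(ii)(b) gives $g(z_{0}^{2})\neq 0$ and Lemma~\ref{l1}(i) rewrites as $f(xy)=k[f(x)g(y)+f(y)g(x)]$ with $k:=g(z_{0})/g(z_{0}^{2})$, so the pair $(f,kg)$ satisfies (\ref{1}); invoking Ebanks's classification \cite{c,d} of the sine addition law and re-imposing the original equation (\ref{3}) pins the parameters down to either $\chi_{1}(z_{0})=\chi_{2}(z_{0})$ (producing case (4)) or an additive parameter with $A(z_{0})=0$ (producing the remaining part of case (7)). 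If instead $f(z_{0}^{2})\neq 0$, Lemma~\ref{l2}(i) gives $g(z_{0}^{2})=0$, and Lemma~\ref{l1}(i) now reads $g(xy)=k'[f(x)g(y)+f(y)g(x)]$ with $k':=g(z_{0})/f(z_{0}^{2})$, so the roles of $f$ and $g$ swap and $(g,f/k')$ solves (\ref{1}); the consistency check at $z_{0}$ forces $\chi_{2}(z_{0})=-\chi_{1}(z_{0})$ and yields case (3). The converses in each case reduce to direct substitution into (\ref{3}), and the continuity statements follow as in Proposition~\ref{t2} by solving algebraically for $\chi$, $\chi_{i}$, $A$, $\rho$, $f_{z_{0}}$ as rational expressions in $f,g$. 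The principal obstacle is the bookkeeping in this last paragraph, where matching Ebanks's list of sine-addition solutions against cases (3), (4), (7) requires careful use of the values of $f$ and $g$ at $z_{0}$ and $z_{0}^{2}$ to eliminate spurious branches.
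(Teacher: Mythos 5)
Your overall architecture coincides with the paper's: the same initial dichotomy on linear dependence (giving cases (1), (2), (6) via Stetk\ae r's Proposition 16), then the split on $f(z_0)=0$ versus $f(z_0)\neq 0$, using Lemma \ref{l2}(ii)(a) to get $g(z_0)\neq 0$, Lemma \ref{l1} to reduce to the sine addition law (\ref{1}) or to the pair of multiplicative Kannappan equations for $g\pm\gamma f$, and the further split on $f(z_0^2)$. Your explicit separation of $\gamma=0$ from $\gamma\neq 0$ in the $f(z_0)\neq 0$ branch is a slightly cleaner presentation than the paper's (which absorbs $\gamma=0$ into the subcase $\chi_1(z_0)\chi_1=\chi_2(z_0)\chi_2$), and it is correct. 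One small slip: in the subcase $f(z_0)=0$, $f(z_0^2)\neq 0$, the identity $g(xy)=k'[f(x)g(y)+f(y)g(x)]$ says that the pair $(g,\,k'f)$ satisfies (\ref{1}), not $(g,\,f/k')$; this only shifts a constant, but it should be stated correctly.

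The genuine gap is in what you dismiss as ``bookkeeping.'' When you apply Ebanks's classification of (\ref{1}) to the pair $(g,\beta f)$ in the subcase $f(z_0)=0$, $f(z_0^2)\neq 0$, that classification contains not only the branch $\beta f=(\chi_1+\chi_2)/2$ with $\chi_1\neq\chi_2$ (which you correctly push to case (3)), but also the degenerate branch $\chi_1=\chi_2=:\chi$, in which $g$ is built from $A\chi$ on $S\setminus I_\chi$ and a function $\rho$ on $P_\chi$. Theorem \ref{t1} has no corresponding solution family, so this branch must be \emph{proved impossible}, and that is not a routine parameter check: since $\beta f(z_0)=\chi(z_0)=0$ one gets $z_0\in I_\chi$, then $g(z_0)\neq 0$ together with $g=0$ on $I_\chi\setminus P_\chi$ forces $z_0\in P_\chi$, and only then does substituting back into (\ref{3}) over $S\setminus I_\chi$ yield $A=0$ and $\rho(z_0)=0$, contradicting $\rho(z_0)=g(z_0)\neq 0$. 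Your write-up never acknowledges that this branch exists, let alone eliminates it. Likewise, in the subcase $f(z_0^2)=0$ you do name the degenerate branch (``an additive parameter with $A(z_0)=0$''), but turning it into case (7) requires the linear-independence lemma for $\{\chi A,\chi\}$ (Lemma \ref{ui}(b)) and a separate treatment of $A=0$ with $\rho\neq 0$; these are the substantive steps of the proof and should be carried out rather than deferred.
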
	  				  		
		 		 \begin{proof}				  	
		 		It is easy to check that if $f=0$, then $g$ can be chosen arbitrary, so we have  the solution family (1). From now on  we may assume that $f\neq0$.\\ If $f$ and $g$ are linearly dependent, then there exists a constant $ b \in \mathbb{C}$ such that $g=bf$ and therefore   equation (\ref{3}) becomes
		 		$
		 	f(xyz_0)=2bf(x)f(y),\; x,y \in S.
		 		  $\\
		 		 If $b=0$ then $g=0$ and $f(xyz_0)=0$ for all $x, y \in S$, so necessarily  we have $S\neq S^2z_0$, because $ f\neq0$, and then we have the  solution family (6)  for an arbitrary non-zero function $f_{z_{0}}:S\setminus S^2z_0\longrightarrow   \mathbb{C}$.
		 		 If $b\neq0$, then by \cite[Proposition 16]{i} we infer that there exists a non-zero multiplicative function $\chi$ on $S$ such that $2bf=: \chi(z_0)\chi$, which gives
		 		 \begin{center}
		 		 $f=\dfrac{\chi(z_0)}{2b}\chi$ \; and \;   $g=\dfrac{\chi(z_0)}{2}\chi$,
		 		\end{center}
		 		  and this proves that $(f, g)$ belongs
		 		 to the solution family (2).\\ For the rest of the proof we assume that $f$ and $g$ are linearly  independent. Then, according to Lemma \ref{l2} (ii)-(a),  we get that $g(z_0)\neq 0 $.
		 		
		 		 To complete the proof we split it into   two   cases according
		 		  to whether $f(z_0)=0 $   or $f(z_0) \neq0$.
		 		
		 		  {Case 1}. Suppose $f(z_0)=0 $. From Lemma \ref{l1} we get that $f$ and $g$ satisfy the following functional equation
		 		   \begin{equation}
		 		   \label{9}
		 		   g(z_0^2)f(xy)= g(z_0)f(x)g(y)+g(z_0)f(y)g(x)-f(z_0^2)g(xy),\;x,y\in S,
		 		   \end{equation}
		 		
		 		     Subcase 1.1: Suppose $f(z_0^2)=0$, then by lemma \ref{l2}  (ii)-(b) we get that     $g(z_0^2)\neq0$. Therefore
		 		        equation (\ref{9}) can be rewritten as follows
		 		        \begin{center}
		 		     $
		 		      f(xy)=  \alpha f(x)g(y)+ \alpha f(y)g(x), \; x,y\in S,
		 		     $
		 		    \end{center}
		 		      with $\alpha:=\dfrac{g(z_0)}{g(z_0^2)}\neq0$. Then the pair $(f, \alpha g)$ satisfies the sine addition law (\ref{1}). According to \cite[Theorem 4.1 (b)]{g}
		 		  we infer that there exist  two multiplicative functions $\chi_1$ and $\chi_2$ on $S$ such that
		 		      $
		 		       \alpha g=\dfrac{\chi_1+\chi_2}{2}.
		 		      $\\
		 		      (i) For the case $\chi_1\neq \chi_2$, there exists a constant $c\in \mathbb{C}^{\ast}$  such that
		 		       \begin{center}
		 		       $
		 		     f=c(\chi_1 - \chi_2)$ \; and\;  $g=\dfrac{\chi_1+\chi_2}{2\alpha}.
		 		     $
		 		      \end{center}
		 		       Furthermore, if $\chi_1=0$ or $\chi_2 =0$, we get  $f,g$ linearly dependent, which contradict the assumption that $f$ and $g$ are linearly independent, so  $\chi_1$ and $\chi_2$ are exponentials.\\
		 		     By using (\ref{3}) we get
		 		    \begin{equation*}\begin{split}
		 		     0&=f(xyz_0)-f(x)g(y)-f(y)g(x)\\
                     &=c(\chi_1 - \chi_2)(xyz_0) -c(\chi_1 - \chi_2)(x)\left(\dfrac{\chi_1+\chi_2}{2\alpha}\right)(y)\\&\quad-c(\chi_1 - \chi_2)(y)\left(\dfrac{\chi_1+\chi_2}{2\alpha}\right)(x)\\
		 		   &=\left(c\chi_1(z_0)-\dfrac{c}{\alpha}\right)\chi_1(xy)-\left(c\chi_2(z_0)-\dfrac{c}{\alpha}\right)\chi_2(xy),
		 		     \end{split}\end{equation*}
		 		     for all $x,y\in S$. Since $\chi_1$ and $\chi_2$ are different we get by Lemma \ref{ui} (a) that
		 		     \begin{center}
		 		     $
		 		    c\chi_1(z_0)-\dfrac{c}{\alpha}=0 $ \;and \;$ c\chi_2(z_0)-\dfrac{c}{\alpha}=0  ,
		 		   $ \end{center}
		 		       which gives $\chi_1(z_0)=\chi_2(z_0)=\dfrac{1}{\alpha} $, since $ c\neq0$. Then   $g$ becomes
		 		     $g=  \chi_1(z_0)(\chi_1+\chi_2)  / 2$. This gives that the solution is
		 		     in part (4).
		 		         (ii) For the case $\chi_1 =\chi_2=:\chi$  with  $\chi$ exponential, we have  $\alpha g=\chi$ and $f$ has the form (3) of \cite[Theorem 3.1]{d}.  Since $0 \neq \alpha g(z_0) =\chi(z_0)$ in the present Subcase 1.1,  we   get that $z_0 \in S\setminus I_{\chi}$. Now for all $x,y$  $\in S\setminus I_{\chi}$ we have $ xyz_0\in S\setminus I_{\chi}$   and by using (\ref{3}) we get that
		 		
		 		         	\begin{equation*}\begin{split}0&=f(xyz_0)-f(x)g(y)-f(y)g(x)\\
		 		         	 &= A(xyz_0)\chi(xyz_0)-\dfrac{A(x)\chi(xy)}{\alpha }-\dfrac{A(y)\chi(xy)}{\alpha }\\
		 		         	&= A(xy)\chi(xy)\left( \chi(z_0)-\dfrac{1}{\alpha }\right) +A(z_0)\chi(z_0)\chi(xy).\end{split}\end{equation*}
		 		         	 If $A\neq0$, then by Lemma \ref{ui}.(b) we get   that
		 		         	 \begin{center}
		 		         	  $\chi(z_0)-\dfrac{1}{\alpha}=0$\; and \; $A(z_0)\chi(z_0)=0.
		 		         	  $\end{center}
		 		         	  Then we deduce that $\chi(z_0)=\dfrac{1}{\alpha}\neq0$ and therefore $A(z_0)=0$. So we are in solution family (7).\\If $A=0,$ then  $f= A\chi=0$ on $S\setminus I_{\chi}$ and necessarily we have $\rho \neq0 $ because $f\neq0$. Next, let  $x\in P_{\chi}$  and $y\in S\setminus I_{\chi}$, then $xyz_0 \in P_{\chi}$  by    Lemma \ref{eb}.    By using (\ref{3}) and   Lemma \ref{eb} we get that
		 		         	  \begin{center}
		 		           $\rho(x)\chi(y)\chi(z_0)=\rho(xyz_0) =f(xyz_0) =f(x) g(y)+f(y)g(x)
		 		         	= \dfrac{1  }{\alpha} \rho(x) \chi(y),$
		 		         	\end{center}
		 		         	 since $f=0$ on $S\setminus I_{\chi}$ and $\chi(x)=0$.   This implies that $\chi(z_0)=1/\alpha $ because $ \rho\neq0$ and $\chi(y)\neq0$.	
		 		         	  So we have  the special case of the solution family (7).
		 		         	
		 		         	  Subcase 1.2:	  Suppose $f(z_0^2)\neq 0$. From lemma \ref{l2} (i) we read that $g(z_0^2)= 0$.    Therefore   Eq.(\ref{9}) yields
		 		     \begin{equation*}
		 		      \label{bhg}
		 		    g(xy)=\beta g(x)f(y)+\beta g(y)f(x), \; x,y \in S,
		 		     \end{equation*}
		 		    with $\beta:= \dfrac{g(z_0)}{f(z_0^2)}\neq0  $ since $g(z_0)\neq 0$.
		 		   This means that
		 		     the pair $(g,  \beta f )$ satisfy the sine addition formula (\ref{1}) with $g\neq0$ since $f$ and $g$ are linearly independent. Applying   \cite[Theorem 4.1 (b)]{g}   we infer that there    exist   two   multiplicative functions $\chi_1$ and $\chi_2$ on $S$ such that:
		 		   $
		 		    \beta f =\dfrac{\chi_1+\chi_2}{2}.
		 		  $ \\(i) In  the case $\chi_1\neq \chi_2$, there    exists a constant $c\in \mathbb{C}^{*}$ such that
		 		  \begin{center}
		 		    $g=c(\chi_1 - \chi_2)$ \; and  \; $
		 		     f=\dfrac{\chi_1+\chi_2}{2\beta }. $
		 		     \end{center}
		 		    Furthermore, $\chi_1 $ and $\chi_2 $ are exponentials, because $f$ and $g$ are linearly independent.
		 		     A small computation based on (\ref{3})  shows that
		 		     \begin{equation*}
		 		     \left( \dfrac{c}{\beta}-\dfrac{\chi_1(z_0)}{2\beta}\right) \chi_1(xy)- \left( \dfrac{c}{\beta}+\dfrac{\chi_2(z_0)}{2\beta}\right) \chi_2(xy)=0.
		 		     \end{equation*}
		 		     Since $\chi_1\neq \chi_2$   we get by Lemma \ref{ui} (a) that $\chi_1(z_0)=-\chi_2(z_0)=2c$. So we are in the part (3) of our statement, with $\delta:=1/2 \beta$.		 		
		 		    (ii)  In the case $\chi_1 =\chi_2:=\chi $ with $\chi$ exponential, we have $\beta f=\chi $  and $g$  has the form (3) of \cite[Theorem 3.1]{d}. Since $0=\beta f(z_0)=\chi(z_0)$ by the hypothesis of this case 1, we have necessarily $z_0 \in I_{\chi}$. On the other hand since  $g(z_0)\neq0$ we get  that $ z_0 \in   P_{\chi}$, because  $g =0$ on $I_{\chi}\setminus P_{\chi}$ and $I_{\chi}= P_{\chi}\cup (I_{\chi}\setminus P_{\chi} )$. Now let $ x, y\in S\setminus I_{\chi}$, then by Lemma \ref{eb}  we get $xyz_0 \in  P_{\chi} $. Therefore by using (\ref{3}) we obtain
		 		      \begin{equation*}\begin{split}0&=f(xyz_0)-f(x)g(y)-f(y)g(x)\\
		 		      	&=\rho(xyz_0)-\dfrac{1}{\beta}\chi(x)A(y) \chi(y)
		 		      	-\dfrac{1}{\beta}\chi(y)A(x)\chi(x)\\
		 		      		&=\rho( z_0) \chi(xy)-\dfrac{1}{\beta} A(y)\chi(xy)
		 		      	 -\dfrac{1}{\beta} A(x)\chi(xy) A(x)
		 		      	 \\
		 		      	&=\rho( z_0) \chi(xy)-\dfrac{1}{\beta}\chi(xy)A(xy),
		 		      \end{split}\end{equation*}
		 		    which gives $\rho( z_0)=   \dfrac{1}{\beta} A(xy)$. Therefore $A=0$  and $\rho( z_0)=0$ which is a contradiction, because $\rho( z_0)=g(z_0)\neq0$ by Lemma \ref{l2} (ii)-(a). So this case is excluded.

		 		   {Case 2}. Suppose $f(z_0)\neq0 $. Multiplying (\ref{3}) by $\pm \gamma$  where  $\gamma \in \mathbb{C}$ is the constant in
		 		   the identity (\ref{63}) and  adding the result to the identity (\ref{63}) we get
		 			 \begin{equation*}(g+\gamma f)(xyz_0)=(g+\gamma f)(x)	(g+\gamma f)(y)
		 		         \end{equation*} and
		 		      \begin{equation*}
		 		      (g-\gamma f)(xyz_0)=(g-\gamma f)(x)	(g-\gamma f)(y).
		 		   \end{equation*}
		 		      According to \cite[Proposition 16]{i}, we infer that there exist  two exponentials functions $\chi_1$ and $\chi_2$  on $S$ such that
		 		     \begin{equation*}
		 		       \chi_1(z_0)\chi_1:= g+\gamma f	 \hspace{0.3cm} and \hspace{0.3cm} \chi_2(z_0)\chi_2:=	g-\gamma f,
		 		    \end{equation*}
		 		     with $ \chi_1(z_0) \neq0 $ and $ \chi_2(z_0)  \neq0 $, because $g-\gamma f \neq0$ and $g+\gamma f \neq0$ since $f$ and $g$ are linearly independent. Then we get
		 		     \begin{center}
		 		     $g= \dfrac{\chi_1(z_0)\chi_1+\chi_2(z_0)\chi_2}{2}.$
		 		    \end{center}
		 		      If $\chi_1(z_0)\chi_1\neq\chi_2(z_0)\chi_2$, then    $2\gamma f=\chi_1(z_0)\chi_1-\chi_2(z_0)\chi_2$ and $\gamma\neq0$. Therefore
		 		      \begin{center}
		 		      $f= \dfrac{\chi_1(z_0)\chi_1-\chi_2(z_0)\chi_2 }{2\gamma} $.
		 		     \end{center}
		 		    This solution falls in  solution family (5).\\If $\chi_1(z_0)\chi_1=\chi_2(z_0)\chi_2 $,  then   we get by Lemma \ref{ui} (a)   that $ \chi_1=\chi_2$,  since  $ \chi_1(z_0)\neq0$ and $\chi_2(z_0)\neq0$. This allows us to define $\chi:= \chi_1= \chi_2$ with $\chi$    exponential such that $\chi(z_0)\neq0$, and we obtain $  g= \chi(z_0)\chi$. Therefore Eq.(\ref{3}) becomes
		 		      \begin{equation}
		 		       \label{99}
		 		       f(xyz_0)=f(x)\chi(z_0)\chi(y)+f(y)\chi(z_0)\chi(x).
		 		      \end{equation}
		 		       This equation was solved in Proposition \ref{t2}.    The solutions in this case occur in  part (7).
		 		
		 		       That $f$ and $g$ are abelian in the cases (2)-(5)    can be derived from  the solutions formulas above and the case (7) is  established in Proposition \ref{t2}.
		 		
		 		       	Conversely it is easy to check that the formulas of $f$ and $g$ listed in  Theorem 4.4 define  solutions of (\ref{3}).
		 		       	
		 		       	Finally, suppose that $S$ is a topological semigroup. The continuity of cases (1)-(5) follows from \cite[Theorem 3.18]{g} since $f\neq0$,  $c\neq0,b\neq0,  \delta\neq0, \chi(z_0)\neq0,\chi_1(z_0 )\neq0$, $\chi_2(z_0)\neq0$ and also  $\chi_1$ and $\chi_2$ are different. For the case (6), $f_{z_{0}}$ is continuous by restriction.
		 		        The case (7) follows  directly from Proposition \ref{t2}.
		 		         \end{proof}
		 		         In the following Proposition we show that the functional equation (\ref{5}) has only trivial solutions on semigroup.
		 		          \begin{prop}
		 		          	\label{df}
		 		          	The solutions $f,g:S\longrightarrow \mathbb{C}$ of the functional equation (\ref{5}) are the following pairs of   functions listed below\\
		 		          	(1) $f=0$ and $g$ arbitrary \\
		 		          	(2)  $S\neq S^2z_0 $ and  there exists a constant $c\in \mathbb{C}$  such that
		 		          	
		 		          	 \begin{equation*}
		 		          	 g=cf \hspace{0.4cm}  and  \hspace{0.4cm}
		 		          	 f(x)=
		 		          	 \begin{cases}
		 		          	 f_{z_0}(x)\hspace{0.6cm} for\hspace{0.3cm} x\in S  \setminus S^2z_0 \\
		 		          	 0\hspace{1.5cm} for\hspace{0.3cm} x\in   S^2z_0,
		 		          	 \end{cases}
		 		          	 \end{equation*}
		 		          	where   $f_{z_0}:S  \setminus S^2z_0  \longrightarrow \mathbb{C}$  is an arbitrary non-zero function.
		 		          \end{prop}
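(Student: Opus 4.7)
My plan is as follows. The case $f\equiv 0$ immediately produces solution family (1) with $g$ arbitrary, so from now on I assume $f\neq 0$ and aim to show that $(f,g)$ must belong to family (2); this means proving that $f$ vanishes on $S^2z_0$ (hence $S\neq S^2z_0$ since $f\neq 0$) and that $g=cf$ for some $c\in\mathbb{C}$.

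The cornerstone of the argument is to exploit the antisymmetry of the right-hand side of (\ref{5}): interchanging $x$ and $y$ produces $f(abz_0)=-f(baz_0)$ for all $a,b\in S$. I will apply this single identity to the triple product $xyz\cdot z_0$ using three different factorizations, namely $(a,b)=(xy,z)$, $(a,b)=(x,yz)$, and, applied to the rearranged element $yzx\cdot z_0$, the splitting $(a,b)=(y,zx)$. The three resulting relations among $f(xyzz_0)$, $f(yzxz_0)$ and $f(zxyz_0)$ force $f(xyzz_0)=-f(xyzz_0)$, whence
\[
f(abcz_0)=0\quad\text{for all }a,b,c\in S,
\]
i.e.\ $f$ vanishes on $S^3z_0$. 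This three-grouping manipulation is the only genuinely non-routine step, and I expect the real difficulty of the proof to concentrate here.

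Feeding the factorization $(a,b)=(ab,c)$ into (\ref{5}) and using the preceding vanishing yields the constraint $f(ab)g(c)=f(c)g(ab)$ for all $a,b,c\in S$. If $g\equiv 0$, the original equation directly gives $f(xyz_0)=0$ and we are done with $c=0$. Otherwise, pick $c_0\in S$ with $g(c_0)\neq 0$; the constraint gives $f=kg$ on $S^2$ for the scalar $k=f(c_0)/g(c_0)$. A brief sub-case analysis (whether $g$ takes a nonzero value somewhere on $S^2$ or $g$ vanishes identically on $S^2$; in the latter case use $f\equiv 0$ on $S^2$ to turn the original equation into $f(x)g(y)=f(y)g(x)$ and then specialize $y=c_0$) shows in both sub-cases that $f=kg$ on all of $S$, with $k\neq 0$ because $f\neq 0$. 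Setting $c:=k^{-1}$ gives $g=cf$; substituting this into (\ref{5}) annihilates the right-hand side, so $f\equiv 0$ on $S^2z_0$. Since $f\neq 0$ this forces $S\neq S^2z_0$, and $f$ restricted to $S\setminus S^2z_0$ is the arbitrary non-zero function $f_{z_0}$ of family (2). The converse direction is an immediate substitution, both sides of (\ref{5}) reducing to $0$.
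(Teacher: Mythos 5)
Your proposal is correct and rests on the same key manipulation as the paper's own proof: the antisymmetry $f(xyz_0)=-f(yxz_0)$ combined with the cyclic regrouping of $xyz\cdot z_0$ to force $f=0$ on $S^3z_0$, followed by the relation $f(xy)g(z)=f(z)g(xy)$. The only difference is organizational --- the paper first splits on linear (in)dependence of $\{f,g\}$ and uses this chain to eliminate the independent case by contradiction, whereas you run the chain unconditionally and derive the proportionality $g=cf$ directly; the mathematics is the same.
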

		 		         \begin{proof}
		 		         	If $f=0$ then $g$   can be chosen arbitrary. So from now we may assume that $f\neq0$. If $f$ and $ g $ are linearly  dependent, then there exists a constant $c \in \mathbb{C}$ such that $g=c f$. Therefore Eq.(\ref{5})  becomes $f(xyz_0)=0$ for all $ x,y \in S$, so   necessarily we have $S\neq S^2z_0$, because $f\neq 0$. Then we get the    solution family (2). \\If  $f$ and $ g $ are linearly independent. By interchanging $x$ and $y$ in (\ref{5}) we get
		 		         	\begin{center}
		 		       $f(xyz_0)=-f(yxz_0)$, \;    $x,y \in S$.
		 		   \end{center}
		 		   Using this we obtain
		 		       \begin{center}
		 		        $f(xyzz_0)=-f(zxyz_0)=f(yzxz_0)=-f(xyzz_0)$,\;  $x,y,z \in S,$
		 		    \end{center}
		 		         which gives $f(xyzz_0)=0$  for all  $x,y,z \in S$. Therefore by using (\ref{5})	 we get that
		 		          \begin{center}
		 		          $f(xyzz_0)=f(xy)g(z)-f(z)g(xy)=0, $ for all $x,y,z \in S,$
		 		       \end{center}
		 		        which implies
 \begin{equation}
 \label{lh}
 f(xy)g(z)=f(z)g(xy),  \; x,y,z \in S.
 \end{equation}
If there exist $x_0, y_0 \in S$ such that $f(x_0y_0)\neq0$, then (\ref{lh}) with $(x,y)=(x_0,y_0)$ contradicts that $f $ and $g$ are linearly independent. Thus
$f = 0$ on $S^2$. In particular $f(xyz_0) = f((xy)z_0) = 0$, so by (\ref{5})  $f(x)g(y) =
f(y)g(x)$ for all $x,y\in S$ which contradicts that $f$ and $g$ are linearly independent. This completes the proof of Proposition \ref{df}.
		 		         	\end{proof}
		 		         	

		 		        \end{document}